\newcommand{\borysitem}[1]{%
	\item[#1]\protected@edef\@currentlabel{#1}%
}
\DeclareSymbolFont{cyrletters}{OT2}{wncyr}{m}{n}
\DeclareMathSymbol{\Sha}{\mathalpha}{cyrletters}{"58}
\newcommand{\defi}[1]{\textsf{#1}} 
\newcommand{\F}{\mathbb{F}}
\newcommand{\N}{\mathbb{N}}
\newcommand{\PP}{\mathbb{P}}
\newcommand{\Q}{\mathbb{Q}}
\newcommand{\Z}{\mathbb{Z}}
\newcommand{\calA}{\mathcal{A}}
\newcommand{\calB}{\mathcal{B}}
\DeclareMathOperator{\Aut}{Aut}
\DeclareMathOperator{\Gal}{Gal}
\DeclareMathOperator{\Hom}{Hom}
\DeclareMathOperator{\im}{im}
\newcommand{\slantsf}[1]{\textsl{\textsf{#1}}}
\newtheorem{theorem}{Theorem}[section]
\newtheorem{lemma}[theorem]{Lemma}
\theoremstyle{definition}
\newtheorem{definition}[theorem]{Definition}
\newtheorem{conjecture}[theorem]{Conjecture}
\theoremstyle{remark}
\newtheorem{remark}[theorem]{Remark}
\begin{document}

\title[Odoni's conjecture is false]{Odoni's conjecture on arboreal Galois representations is false}
\author{Philip Dittmann}
\address{Philip Dittmann, Technische Universität Dresden, Fakultät Mathematik, Institut für Algebra, 01062 Dresden, Germany}
\email{philip.dittmann@tu-dresden.de}
\author{Borys Kadets}
\address{Borys Kadets, Mathematical Sciences Research Institute, 17 Gauss Way, Berkeley, CA 94720-5070}
\thanks{This material is based upon work supported by the National Science Foundation under Grant No. DMS-1928930 while the authors participated in a program hosted by the Mathematical Sciences Research Institute in Berkeley, California, during the Fall 2020 semester.}
\email{kadets.math@gmail.com}
\urladdr{\url{http://bkadets.github.io}}

\begin{abstract}
Suppose $f \in K[x]$ is a polynomial. The absolute Galois group of $K$ acts on the preimage tree $\mathrm{T}$ of $0$ under $f$. The resulting homomorphism $\phi_f\colon \Gal_K \to \Aut \mathrm{T}$ is called the arboreal Galois representation. Odoni conjectured that for all Hilbertian fields $K$ there exists a polynomial $f$ for which $\phi_f$ is surjective. We show that this conjecture is false.
\end{abstract}

\maketitle

\section{Introduction}\label{S:introduction}

Suppose that $K$ is a field and $f \in K[x]$ is a polynomial of degree $d$. Suppose additionally that $f$ and all of its iterates $f^{\circ k}(x)\colonequals f\circ f \circ \dots \circ f$ are separable. To $f$ we can associate the arboreal Galois representation -- a natural dynamical analogue of the Tate module -- as follows. Define a graph structure on the set of vertices $V\colonequals \bigsqcup_{k\geqslant 0} \left(f^{\circ k}\right)^{-1}(0)$ by drawing an edge from $\alpha$ to $\beta$ whenever $f(\alpha)=\beta$. The resulting graph is a complete rooted $d$-ary tree $\mathrm{T}_\infty(d)$. The Galois group $\Gal_K$ acts on the roots of the polynomials $f^{\circ k}$ and preserves the tree structure; this defines a morphism $\phi_f\colon \Gal_K \to \Aut \mathrm{T}_\infty(d)$ known as the \defi{arboreal representation} attached to $f$.
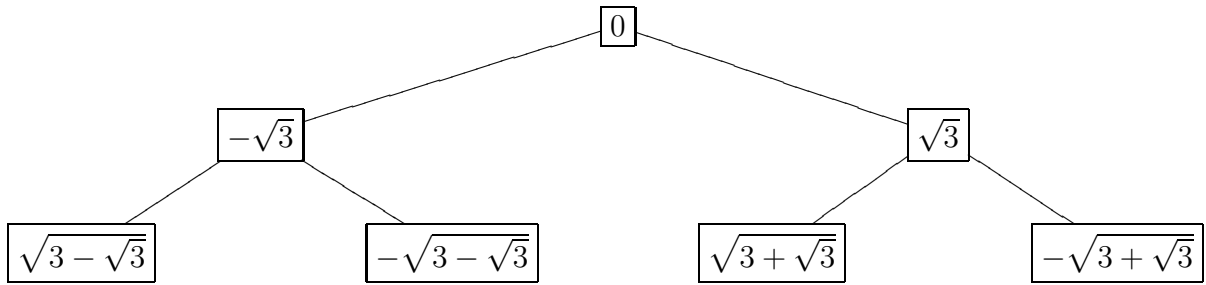
\begin{figure}[h]
\centerline{
	\xymatrix{
		&  & &*+[F]{0} \ar@{-}[drr] \ar@{-}[dll] & && \\
		&*+[F]{ -\sqrt{3}} \ar@{-}[dr] \ar@{-}[dl] & & & &*+[F]{\sqrt{3}} \ar@{-}[dr] \ar@{-}[dl] & \\
		*+[F]{\sqrt{3-\sqrt{3}}} &  &*+[F]{-\sqrt{3-\sqrt{3}}} & &*+[F]{\sqrt{3+\sqrt{3}}} & &*+[F]{-\sqrt{3+\sqrt{3}}} 
}
}
\caption{First two levels of the tree $\mathrm{T}_\infty(2)$ associated with the polynomial $f=x^2-3$}
\end{figure}

This definition is analogous to that of the Tate module of an elliptic curve, where the polynomial $f$ is replaced by the multiplication-by-$p$ morphism. However, in contrast to the case of Tate modules, arboreal representations are rather poorly understood. In particular, the images of arboreal representations are often expected to be large (see \cite{Jones2013} for a survey), even though describing these images is out of reach even for some quadratic polynomials. 

In \cite{Odoni1985a} Odoni proves that for the generic polynomial the associated arboreal representation is surjective. Therefore, over fields in which Hilbert's irreducibility theorem holds -- the so-called Hilbertian fields -- for every integer $k$ the Galois group of $f^{\circ k}$ is maximal for infinitely many polynomials $f$. He then asks if the same holds for the whole arboreal representation.

\begin{conjecture}[\cite{Odoni1985a}*{Conjecture~7.5.}]\label{Odoni}
	Suppose $K$ is a Hilbertian field of characteristic zero, and $d>1$ is an integer. Then there exists a degree $d$ monic polynomial $f \in K[x]$ such that the associated arboreal representation $\phi_f\colon \Gal_k \to \Aut \mathrm{T}_\infty(d)$ is surjective.
\end{conjecture}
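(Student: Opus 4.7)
The natural strategy is to combine Odoni's own genericity theorem (invoked in the introduction) with Hilbert's irreducibility theorem. Consider the purely transcendental extension $K(\mathbf{t}) = K(t_0, \ldots, t_{d-1})$ and the generic polynomial
\[
F(x) = x^d + t_{d-1}x^{d-1} + \cdots + t_0 \in K(\mathbf{t})[x].
\]
By Odoni's result, each iterate $F^{\circ k}$ has Galois group over $K(\mathbf{t})$ equal to the full automorphism group $\Aut \mathrm{T}_k(d)$ of the finite subtree of depth $k$, namely the $k$-fold iterated wreath product of $S_d$.

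For each fixed $k$, Hilbert's irreducibility theorem produces a thin subset
\[
\Sigma_k = \bigl\{\mathbf{a} \in K^d : \Gal(f_{\mathbf{a}}^{\circ k}/K) \subsetneq \Aut \mathrm{T}_k(d)\bigr\}
\]
of $K^d$, where $f_{\mathbf{a}}$ denotes the specialization of $F$ at $\mathbf{t} = \mathbf{a}$. Since $K$ is Hilbertian, no finite union $\Sigma_1 \cup \cdots \cup \Sigma_N$ equals all of $K^d$. A polynomial $f = f_{\mathbf{a}}$ has surjective arboreal representation precisely when $\mathbf{a}$ lies outside the entire countable union $\bigcup_{k \geq 1} \Sigma_k$, so the conjecture follows provided such an $\mathbf{a}$ can be produced.

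To secure this, I would attempt a Frattini-type reduction: locate a threshold $k_0 = k_0(d)$ such that maximality of $\Gal(f^{\circ k}/K)$ at every level $k \leq k_0$ automatically forces maximality at all higher levels. If such a $k_0$ were available, only finitely many $\Sigma_k$ would need to be avoided, and Hilbertianity would finish the argument. A supplementary approach would be to restrict attention to a tractable family of polynomials (for instance $x^d-c$, or polynomials with prescribed ramification at a cleverly chosen finite set of primes) in which the step from level $k$ to level $k+1$ can be verified by an inductive ramification or discriminant computation.

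The main obstacle is precisely this reduction from infinitely many conditions to finitely many. In passing from level $k$ to level $k+1$, the Galois group must fill up the kernel of $\Aut \mathrm{T}_{k+1}(d) \twoheadrightarrow \Aut \mathrm{T}_k(d)$, a direct product of $d^k$ copies of $S_d$, and there is no purely group-theoretic reason that maximality at lower levels should force this. Hilbertianity is a statement about thin sets rather than about countable unions of them, so it provides no direct handle on $\bigcup_{k\geqslant 1} \Sigma_k$. Closing the gap would require genuinely arithmetic input on $K$ that controls all $\Sigma_k$ uniformly in $k$, and it is this reduction step—rather than either Odoni's theorem or Hilbert irreducibility individually—that constitutes the hard part of the conjecture.
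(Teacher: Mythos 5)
The statement you are trying to prove is Odoni's conjecture, and the main result of this paper (Theorem \ref{Odonifail}) is that it is \emph{false}: there exist Hilbertian fields of characteristic zero (algebraic extensions of any countable Hilbertian field) over which \emph{every} polynomial of degree $d\geqslant 2$ has arboreal image of infinite index in $\Aut \mathrm{T}_\infty(d)$. So no proof can exist, and the obstacle you candidly flag at the end of your proposal is not a technical inconvenience but the precise point where the conjecture fails. Your first two steps are fine and are exactly what Odoni himself did: genericity gives $\Gal(F^{\circ k}/K(\mathbf{t})) = \Aut \mathrm{T}_k(d)$ for each $k$, and Hilbertianity lets you avoid any \emph{finite} union of the thin sets $\Sigma_k$. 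But the hoped-for Frattini-type threshold $k_0(d)$ provably does not exist: $\Aut \mathrm{T}_\infty(d)$ surjects onto $\prod_{n=1}^{\infty}\Z/2\Z$ via the level-wise sign homomorphisms, so it is not topologically finitely generated and its Frattini quotient is infinite. Consequently a closed subgroup can be maximal at every finite level up to $k_0$ (indeed, can have finite index) and still be forced, by a single quadratic condition imposed at arbitrarily deep levels, to miss infinitely much of the group. Surjectivity genuinely is infinitely many independent conditions, and Hilbertianity says nothing about countable unions of thin sets.

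The paper weaponizes exactly this: for each polynomial $f$ with finite-index image over some finite extension $K$ of $k$, the ``discriminant subextensions'' cut out by $(\sigma_n,\sigma_{n+1},\dots)$ are $\prod_{k=1}^{\infty}\Z/2\Z$-extensions, hence \emph{vast} in the sense of the paper (they contain nontrivial subfields disjoint from any prescribed finite extension). The construction of Theorem \ref{MainTheorem} iteratively adjoins to $k$ a nontrivial piece of one such extension for every candidate $f$, choosing each piece disjoint from the finitely many specialization fields needed to preserve Hilbertianity of the limit $F$. Over the resulting Hilbertian field $F$, every arboreal representation meets infinitely many of these quadratic conditions and so has infinite index. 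Your closing remark that ``genuinely arithmetic input on $K$'' is required is exactly right: the conjecture does hold over number fields (Specter, Benedetto--Juul, Kadets), but that is a theorem about number fields, not about Hilbertian fields, and your argument as written uses only Hilbertianity, which this paper shows is insufficient.
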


Most of the work on arboreal representations focuses on the cases when $K$ is a number field or a function field. For example, Odoni's conjecture is known to be true for all number fields; this was proved in varying degrees of generality in \cite{Specter2018-preprint}, \cite{Benedetto-Juul2019}, \cite{Kadets2020}. The goal of this paper is to disprove Odoni's conjecture.

\begin{theorem}\label{Odonifail}
	Suppose $k$ is a countable Hilbertian field of characteristic zero. There exists a Hilbertian algebraic field extension $F/k$ such that for every $f \in F[x]$ of degree $d \geqslant 2$ the image of the associated arboreal representation has infinite index in $\Aut \mathrm{T}_\infty(d)$.
\end{theorem}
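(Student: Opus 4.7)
The plan is to build $F$ as a countable union of Hilbertian subfields of $\kbar$ via a diagonal argument, and to force infinite-index arboreal images by killing, at infinitely many levels, the sign characters of the abelianization of $\Aut \mathrm{T}_\infty(d)$.

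First, the group-theoretic input. The iterated wreath product description $\Aut \mathrm{T}_n(d) = \Aut \mathrm{T}_{n-1}(d) \wr S_d$ combined with the general fact $(G \wr S_d)^{\ab} \cong G^{\ab} \times \Z/2$ yields $\Aut \mathrm{T}_n(d)^{\ab} \cong (\Z/2)^n$, and hence $\Aut \mathrm{T}_\infty(d)^{\ab} \cong \prod_{n \geq 1} \Z/2$ is infinite. The character on $\Aut \mathrm{T}_n(d)$ given by the signature of the induced permutation on the $d^n$ leaves factors through this abelianization, and the family of such characters for varying $n$ is linearly independent. For $f \in F[x]$ the composition $\Gal_F \xrightarrow{\rho_f} \Aut \mathrm{T}_n(d) \to \{\pm 1\}$ is the quadratic character cut out by $F(\sqrt{\operatorname{disc}(f^{\circ n})})/F$, and is trivial precisely when $\operatorname{disc}(f^{\circ n}) \in (F^\times)^2$. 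Consequently, if for every $f \in F[x]$ the set of $n$ with $\operatorname{disc}(f^{\circ n}) \in (F^\times)^2$ is infinite, then the image of $\rho_f$ in $\Aut \mathrm{T}_\infty(d)^{\ab}$ has infinite codimension, and so $\rho_f(\Gal_F)$ has infinite index in $\Aut \mathrm{T}_\infty(d)$.

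For the construction itself, set $F_0 \colonequals k$ and use the countability hypothesis to diagonally enumerate all pairs $(f,n)$ with $f \in F_j[x]$ for some $j$, so that every such polynomial eventually appears for infinitely many $n$. At step $j$, set $F_{j+1} \colonequals F_j\bigl(\sqrt{\operatorname{disc}(f_j^{\circ n_j})}\bigr)$; since this is a finite extension (of degree at most $2$), $F_{j+1}$ is Hilbertian whenever $F_j$ is, by the standard fact that finite separable extensions of Hilbertian fields are Hilbertian. Let $F \colonequals \bigcup_j F_j$. By construction, $F$ is algebraic over $k$ (and in fact lies in the maximal pro-$2$ extension $k^{(2)}$ of $k$), and for every $f \in F[x]$, infinitely many $n$ have $\sqrt{\operatorname{disc}(f^{\circ n})} \in F$, which by the first paragraph forces $\rho_f(\Gal_F)$ to have infinite index in $\Aut \mathrm{T}_\infty(d)$.

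The main obstacle is showing that $F$ is Hilbertian: in general, increasing unions of Hilbertian fields can fail to be Hilbertian, so the argument must exploit the specific structure of the tower. I would handle this by arranging the diagonal construction so that $F$ sits inside (or equals) a Galois extension of $k$ whose Hilbertianness is known from the literature. Since $F \subseteq k^{(2)}$, and for countable Hilbertian $k$ of characteristic zero the extension $k^{(2)}/k$ is a proper subextension of $\kbar/k$ (Hilbert's irreducibility theorem produces irreducible polynomials of odd prime degree over $k$), the Hilbertianness of $F$, or of a slight enlargement of $F$ with the same square-root-closure property, should be deducible from a Hilbertianness theorem for large Galois subextensions (for instance via Weissauer's theorem or Haran's diamond theorem, as developed in Fried--Jarden's \emph{Field Arithmetic}). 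This field-arithmetic step is the genuine technical heart of the proof; the arboreal analysis is essentially formal once $F$ has been shown to be Hilbertian.
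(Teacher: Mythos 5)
Your group-theoretic and dynamical analysis is sound and matches the paper's: the sign characters $\sigma_n$ span a $\prod_{n\geq 1} \Z/2\Z$ quotient of $\Aut\mathrm{T}_\infty(d)$, the $n$-th one is trivial iff $\operatorname{disc}(f^{\circ n})$ is a square, and forcing infinitely many of these to vanish kills finite index in the abelianization and hence in the full group. This is exactly the paper's use of the discriminant subextensions.

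The gap is the Hilbertianness of $F$, which you correctly flag as the heart of the matter but do not actually resolve. Your proposed route does not work. First, as you note, $F\subseteq k^{(2)}$, but this containment buys nothing: $k^{(2)}$ is itself closed under extracting square roots of its own elements (adjoining $\sqrt b$ for $b\in k^{(2)}$ yields a Galois $2$-group extension over $k$, since the Galois closure over $k$ is an extension of a $2$-group by an elementary abelian $2$-group), so $x^2-t$ has no irreducible specialization over $k^{(2)}$ and $k^{(2)}$ is \emph{not} Hilbertian. Thus sitting inside $k^{(2)}$ is entirely compatible with failing to be Hilbertian. Second, neither Weissauer's theorem nor Haran's diamond theorem applies to your $F$ in any visible way: $F$ is not a finite proper extension of a Galois extension of $k$, and you have no two Galois extensions $M_1,M_2$ exhibiting $F$ in a diamond. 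Third, and most fundamentally, your construction has no freedom: at step $j$ you are forced to adjoin the specific element $\sqrt{\operatorname{disc}(f_j^{\circ n_j})}$, and there is no mechanism preventing this element from lying in (or interfering with) the finitely many splitting-field data one must keep disjoint from $F$ in order to certify Hilbertianness.

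The paper's resolution is precisely to exploit the fact that each discriminant datum gives not a single quadratic extension but a $\prod\Z/2\Z$-extension --- a ``vast'' extension in the paper's terminology. Vastness means there are \emph{many} nontrivial quadratic subextensions, so at each stage of the diagonalization one can choose one disjoint from whatever finite extension (coming from integral fibers of an enumerated cover $X\to\PP^1$) must be kept disjoint. This simultaneous bookkeeping --- enumerate covers, pick points with integral fibers, and at each step extend inside a vast extension while dodging a prescribed finite extension (Theorem \ref{MainTheorem}) --- is exactly the extra idea your proposal is missing, and without it the claim that $F$ is Hilbertian is unjustified.
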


We give two separate proofs for Theorem \ref{Odonifail}.
The first proof involves a very explicit general construction applicable to countable collections of special field extensions, of which finite index arboreal representations are a special case. To state the more general result we need the following definition.

\begin{definition}
	An algebraic field extension $L/K$ is called \slantsf{vast} if $L \neq K$ and for every finite extension $F/K$ there exists a subfield $M \supsetneq K$ of $L$ such that $M$ and $F$ are linearly disjoint over $K$.
\end{definition} 

The general result that will be used to prove Theorem \ref{Odonifail} is the following theorem.
\begin{theorem}\label{MainTheorem}
	Let $k$ denote a countable Hilbertian field of characteristic $0$. Suppose $\calA$ is a countable collection of vast extensions $L/K$, such that for every $L/K \in \calA$ the field $K$ is a finite extension of $k$. Then there exists an algebraic Hilbertian field extension $F/k$ such that:
	\begin{enumerate}
		\item the degree $[F:k]$ divides the product of degrees of the extensions from $\calA$ (as supernatural numbers);
		\item for every subextension $K/k$ of $F/k$ and every extension $L/K$ from $\calA$ the extensions $F/K$ and $L/K$ are not linearly disjoint.
	\end{enumerate}
\end{theorem}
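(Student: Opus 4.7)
The plan is to construct $F$ as the union $\bigcup_n F_n$ of an ascending chain $k = F_0 \subset F_1 \subset F_2 \subset \cdots$ of finite extensions of $k$, built by a recursive procedure that dovetails two kinds of tasks: (A) for each pair $L/K \in \calA$ with $K \subset F_n$, extend $F_n$ by a proper subfield of $L/K$ to force non-disjointness; and (B) for each irreducible $g \in F_m[X,Y]$ (separable in $Y$, of positive $Y$-degree), produce a specialization $a \in F_m$ for which $g(a,Y)$ remains irreducible all the way up to $F$. To coordinate the two tasks, I would maintain alongside $F_n$ an auxiliary finite extension $\calE_n/F_n$ obtained as the compositum with $F_n$ of all splitting fields of previously chosen specializations. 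Since $k$ is countable, so are $\calA$ and each $F_m$, so the pool of eligible tasks is countable and a fair dovetailing is possible.

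At a stage devoted to task (A) for a pair $L/K$ with $K \subset F_n$, I would apply vastness of $L/K$ with the finite extension $\calE_n/K$ (valid because $K \subset F_n \subset \calE_n$) to obtain $M \subset L$ with $M \supsetneq K$ and $M$ linearly disjoint from $\calE_n$ over $K$; then set $F_{n+1} = F_n M$ and $\calE_{n+1} = F_{n+1}\calE_n$. A short transitivity argument for linear disjointness shows $F_{n+1}$ is linearly disjoint from $\calE_n$ over $F_n$, so $[F_{n+1}:F_n] = [M:K]$ divides $[L:K]$, and moreover every previously recorded splitting field $E/F_{n_0}$ remains linearly disjoint from the updated $F_{n+1}$ over its base $F_{n_0}$. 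At a stage devoted to task (B) for some $g \in F_m[X,Y]$ that is still irreducible over $F_n$, I would use the Hilbertianity of $F_n$ (immediate, since $F_n/k$ is a finite extension of the Hilbertian field $k$) to pick $a \in F_n$ with $g(a,Y)$ irreducible over $F_n$, let $E_n$ be the splitting field of $g(a,Y)$ over $F_n$, and set $F_{n+1} = F_n$, $\calE_{n+1} = \calE_n E_n$.

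The key invariant, preserved at every step, is that for each recorded splitting field $E/F_{n_0}$ the extension $E$ is linearly disjoint from the current $F_n$ over $F_{n_0}$. Taking the limit $F = \bigcup F_n$, this invariant says $F \cap E = F_{n_0}$, which forces the associated specialization $g(a,Y)$ to remain irreducible over $F$. Since any $g \in F[X,Y]$ irreducible over $F$ lies in some $F_m[X,Y]$ and is automatically irreducible over every $F_n \supset F_m$ (reducibility propagates upwards), the dovetailing reaches it and produces a good specialization, so $F$ is Hilbertian. Condition (1) follows from the per-step degree bound together with the fact that each pair from $\calA$ is processed at most once and only type (A) steps enlarge $F$; condition (2) is immediate, because every $(L/K) \in \calA$ with $K \subset F$ is eventually processed, contributing a subfield $M \subset F \cap L$ with $M \supsetneq K$. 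The main obstacle I anticipate is precisely the interaction between the two kinds of tasks --- type (A) extensions must not collapse earlier Hilbertian specializations --- and the vast property is exactly the strong form of linear disjointness that lets this coordination succeed.
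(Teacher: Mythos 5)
Your construction is essentially the paper's own proof: both build $F = \bigcup F_n$ by dovetailing two kinds of steps, at each stage using vastness (applied to a finite ``obstruction field'' --- your $\calE_n$, the paper's compositum $M$ of function fields) to extend $F_n$ nontrivially inside $L$ while preserving linear disjointness from all splitting/function fields recorded so far, and using Hilbertianity of $F_n$ to pick the next good specialization. Your use of irreducible bivariate polynomials rather than covers $X \to \PP^1$, and your direct application of vastness of $L/K$ to $\calE_n/K$ instead of passing through the base-change Lemma \ref{vastbasechange}, are only cosmetic differences. One point worth being explicit about: the paper's Definition \ref{Hilbertian} requires \emph{infinitely many} good specializations, and the paper secures this by listing each cover in $\calB$ infinitely often; your dovetailing as written hands each $g$ only one good specialization, so you should either re-enqueue each $(m,g)$ at every stage $n\geq m$ (as the paper effectively does) or cite the standard fact (cf.\ \cite{Fried-Jarden2008}) that nonemptiness of every separable Hilbert set already implies Hilbertianity.
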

From this theorem it is easy to deduce Theorem \ref{Odonifail}, with a field $F/k$ whose degree (as a supernatural number) is a power of two.

In Section \ref{sec:modelTheory}, we give a second proof of Theorem \ref{Odonifail} using model-theoretic techniques.
The counterexamples to Odoni's conjecture constructed there are of a very special kind, as they are pseudo-algebraically closed.
The argument crucially relies on the fact that the class of Hilbertian pseudo-algebraically closed fields is model-theoretically well understood.
However, the result obtained in this way through an abstract existence theorem is slightly weaker than what is given by the first proof, see Remark \ref{rem:firstProofImpliesSecond}.
In particular, weaker control on the degree of $F/k$ is obtained in the second proof, see Remark \ref{rem:comparison}; for instance when $k=\Q$ the degree of $F$ over $\Q$ obtained in this way is divisible by every natural number.
Nevertheless the second proof seems to be of independent interest due to the techniques used.

\section{An explicit construction of a counterexample}
\label{sec:explicitConstruction}

Throughout the paper we assume that all fields have characteristic zero. We begin by recalling the definition of a Hilbertian field.

\begin{definition}\label{Hilbertian}
	A characteristic zero field $K$ is called \slantsf{Hilbertian} if for every smooth geometrically integral curve $X/K$ and any nonconstant morphism $f\colon X \to \PP^1_K$ of degree at least $2$, there are infinitely many points $x \in \PP^1(K)$ such that $f^{-1}(x)$ is integral.
\end{definition}
Over such fields Hilbert's irreducibility theorem holds; hence the name.
See \cite[Chapters 12, 13]{Fried-Jarden2008} for a detailed discussion of Hilbertian fields.
\begin{remark}
  There are a few (equivalent) definitions of a Hilbertian field in the literature; the equivalence of Definition \ref{Hilbertian} with other commonly used definitions is proved in \cite[Theorem 1.1]{Bary-Soroker2008}.
  Note that \cite[Theorem 1.1]{Bary-Soroker2008} writes the main results in terms of polynomials. We now explain why the two definitions are equivalent. To disambiguate the notational conflicts with \cite{Bary-Soroker2008}, we write the notation from \cite{Bary-Soroker2008} in bold. To arrive at Definition \ref{Hilbertian} in the notation of \cite{Bary-Soroker2008} take the curve $X$ of Definition \ref{Hilbertian} to be the normalization of the projective closure of $\bf f(T,X)=0$ in \cite{Bary-Soroker2008} and the morphism $f$ of Definition \ref{Hilbertian} to be be the $\bf T$ coordinate on $\bf f(T,X)=0$. Definition \ref{Hilbertian} is not more general then the one from \cite{Bary-Soroker2008}, since every $X, f$ from definition \ref{Hilbertian} has a birational model of the form $\bf f(T,X)=0$ with the map $f$ given by the $\bf T$-coordinate map.
\end{remark}
Before proving Theorem \ref{MainTheorem}, we need the following simple property of vast extensions.
\begin{lemma}\label{vastbasechange}
	Let $L/K$ be a vast extension. Suppose $K'/K$ is a finite extension linearly disjoint from $L/K$. Then $LK'/K'$ is vast. 
\end{lemma}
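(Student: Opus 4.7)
The plan is to transfer the vast property of $L/K$ to $LK'/K'$ by base change along $K'$, exploiting that $L$ and $K'$ are linearly disjoint over $K$. Given any finite extension $F'/K'$, I would take as the required subfield of $LK'$ the field $M' \colonequals MK'$, where $M \subseteq L$ is produced by applying the vast property of $L/K$ to $F'$ viewed as a finite extension of $K$.

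First, the nontriviality condition $LK' \supsetneq K'$ follows from $L \cap K' = K$ (a consequence of the disjointness of $L/K$ and $K'/K$) together with $L \supsetneq K$. For the main condition, I would apply vastness of $L/K$ to the finite extension $F'/K$ to obtain a subfield $M$ of $L$ with $K \subsetneq M$ such that $M$ and $F'$ are disjoint over $K$, and set $M' \colonequals MK'$. Then $M' \supsetneq K'$, because $M \cap K' \subseteq L \cap K' = K$, combined with $M \supsetneq K$, rules out $M \subseteq K'$.

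It remains to check that $M'$ and $F'$ are disjoint over $K'$. Since linear disjointness is inherited by subfields, $M$ and $K'$ are linearly disjoint over $K$, so the multiplication map yields a canonical isomorphism $MK' \cong M \tensor_K K'$. Consequently
\[
M' \tensor_{K'} F' \cong (M \tensor_K K') \tensor_{K'} F' \cong M \tensor_K F',
\]
which injects into $MF' = M'F'$ precisely because $M$ and $F'$ are disjoint over $K$. This gives the desired disjointness of $M'$ and $F'$ over $K'$.

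The only real subtlety is keeping the two base fields $K$ and $K'$ straight in the tensor product calculation; once that is organized, the argument is formal, relying only on standard compatibility of linear disjointness with passage to subfields and with base change.
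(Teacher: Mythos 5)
Your proof is correct and takes a genuinely different route from the paper. The paper's argument first reduces to the case $F' \subseteq L' := LK'$ (by replacing $F'$ with $F'^{\Gal} \cap L'$), then invokes a bijection between subextensions of $L/K$ and of $L'/K'$ to transport $F'$ down to a subfield $F$ of $L$, applies vastness of $L/K$ to this $F$, and pushes the resulting $M$ back up. Your argument bypasses both the reduction step and the subextension bijection: you apply vastness of $L/K$ directly to $F'$ regarded as a finite extension of $K$ (which is legitimate since $K'/K$ is finite), and then verify the required disjointness of $MK'$ and $F'$ over $K'$ via the base-change identity $(M \otimes_K K') \otimes_{K'} F' \cong M \otimes_K F'$. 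This is a cleaner formal argument, and it sidesteps the paper's appeal to the correspondence between intermediate fields of $L/K$ and $LK'/K'$, which in full generality requires some care for non-Galois $L/K$.

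One point worth being explicit about: your tensor-product computation uses the \emph{linear} disjointness characterization throughout (injectivity of $M \otimes_K K' \to MK'$ and of $M \otimes_K F' \to MF'$), whereas the paper's phrase ``disjoint over $K$'' is used elsewhere (e.g.\ in the proof of Theorem \ref{Odonifail}) in the sense of trivial intersection. These two notions agree when one of the two extensions is Galois over the base -- which is the case in every application in the paper, since the vast extensions arising there are $\prod \Z/2\Z$-extensions -- but they are not equivalent in general. Your proof is airtight if ``disjoint'' is read as ``linearly disjoint''; if it is read as ``trivial intersection'' you would need to add a remark (such as Galois-ness of the relevant extension) to pass from $M \cap F' = K$ to the injectivity of $M \otimes_K F' \to MF'$. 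The paper's own proof is subject to the same ambiguity, so this is not a defect of your argument specifically, but it is the one place where the formal tensor manipulations quietly use something slightly stronger than what the definition literally states.
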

\begin{proof}
Suppose $F/K'$ is a finite extension. We need to construct a finite subextension of $LK'/K'$ linearly disjoint from $F$. Consider the extension $F/K$. Since $L/K$ is vast, there exists a finite subextension $M/K$ of $L/K$ linearly disjoint from $F/K$. The compositum $FM/K$ of $F$ and $M$ has degree $[F:K][M:K]$. Since $K' \subset F \subset FM$, the extension $M/K$ is linearly disjoint from $K'/K$ and the degree of $FM$ over $K'$ is \[[FM:K']=[F:K][M:K][K':K]^{-1}=[F:K'][M:K]=[F:K'][K'M:K'].\]
Since $FM/K'$ is the compositum of $F/K'$ and $K'M/K'$, the degree formula above implies that $F/K'$ and $K'M/K'$ are linearly disjoint. Thus $K'M/K'$ is a subfield of $LK'/K'$ linearly disjoint from $F/K'$.
\end{proof}

\begin{proof}[Proof of Theorem \ref{MainTheorem}]
	By Lemma \ref{vastbasechange}, without loss of generality, we can assume that if $L/K$ is an element of $\calA$ and $K'/K$ is a finite extension linearly disjoint from $L$, then $LK'/K'$ is also an element of $\calA$. We can also assume that no finite extension $F/k$ satisfies the conclusion of the theorem.
	
	The idea of the proof is to carefully construct a tower of extensions $k=F_1 \subset F_2 \subset \dots$ such that all elements of $\calA$ defined over a finite subextension of $F = \bigcup F_n$ are not linearly disjoint from $F$, and yet $F$ is Hilbertian.
        This is achieved by a  ``diagonal argument'': we inductively produce extensions $F_{n+1}/F_n$ that are not linearly disjoint from a smallest element of $\calA$ in a certain ordering, while at the same time keeping fibers of finitely many coverings integral when base changed to $F_n$ to eventually force $F$ to be Hilbertian.
 	
	Fix an ordering of all elements of $\calA$ by natural numbers. Let $\calB$ be a list of all covers of smooth geometrically integral curves $f\colon X \to \PP^1,\ \deg f \geqslant 2$ defined over some finite extension of $k$, with every covering repeated in $\calB$ infinitely many times. We now produce sequences of fields $F_1\subset F_2 \subset \dots \subset F_n \subset \dots $, coverings $f_n \in \calB, f_n\colon X_n \to \PP^1$, points $c_n \in \PP^1$, and extensions $L_n/K_n \in \calA$, indexed by natural numbers, with the following properties, that depend on a parameter $m \in \N$.
	\begin{enumerate}
		\borysitem{$(1_m)$}\label{distinctpoints} The point $c_m$ belongs to $\PP^1(F_m)$ and does not coincide with $c_i$ for $i<m$. 
		\borysitem{$(2_m)$}\label{Hilbert} 	For every $i\leqslant m$ the scheme $f_i^{-1}(c_i)$ is integral, and the function field $F_i(f_i^{-1}(c_i))$ of $f_i^{-1}(c_i)$ is linearly disjoint from $F_m/F_i$.
		\borysitem{$(3_m)$}\label{coveringorder}	The covering $f_m \in \calB$ is the first element of $\calB \setminus \{f_1,\dots,f_{m-1}\}$ defined over $F_m$.
		\borysitem{$(4_m)$}\label{fatorder}	The extension $L_m/K_m$ is the first element of $\calA \setminus \{L_{m-1}/K_{m-1},\dots,L_1/K_1\}$ such that $K_m$ is a subfield of $F_m$ and the extensions $F_m/K_m$ and $L_m/K_m$ are linearly disjoint.
		\borysitem{$(5_m)$}\label{nofatextensions}	The extension $F_{m}/F_{m-1}$ is a nontrivial finite subextension of  $L_{m-1}F_{m-1}/F_{m-1}$.
	\end{enumerate}	

	We do so inductively. Set $F_1=k$. Let $f_1 \in \calB$ be the first element defined over $k$. Let $L_1/K_1$ be the first element of $\calA$ with $K=k$; if no such element exists, then $F=k$ is a finite extension satisfying the conclusions of the theorem, which we assumed in the beginning to not exist. Choose a point $c_1 \in \PP^1(k)$ such that $f_1^{-1}(c_1)$ is integral; such a point exists because $k$ is Hilbertian. The properties \ref{distinctpoints}\ndash \ref{fatorder} are satisfied for $m=1$, while the property \ref{nofatextensions} for $m=1$ is vacuous.
	
	 Suppose a sequence $F_m, f_m, c_m, L_m/K_m$ is defined for $m<n$ and satisfies \ref{distinctpoints}\ndash \ref{nofatextensions}. We start by constructing the field $F_n$. Let $M$ denote the compositum of $F_{n-1}(f_i^{-1}(c_i))$ for all $i<n$; it is a finite extension of $F_{n-1}$. Let $L$ denote the compositum $F_{n-1}L_{n-1}$. By Lemma \ref{vastbasechange} and property \ref{fatorder} applied for $m=n-1$, the field extension $L/F_{n-1}$ is vast. Since $L/F_{n-1}$ is vast and $M$ is finite, we can choose a subextension $F_n$ of $L/F_{n-1}$ linearly disjoint from $M$. Let $f_n\in \calB$ be the element defined by condition \ref{coveringorder} for $m=n$. Let $L_n/K_n \in \calA$ be defined by \ref{fatorder} for $m=n$; if no such element exists then by property \ref{nofatextensions} for $m<n$, the finite extension $F_n/k$ satisfies the conclusion of the theorem, which does not happen by assumption. Since $F_n$ is a finite extension of a Hilbertian field it itself is Hilbertian. Therefore there exists a point $c_n \in \PP^1(F_n)$ distinct from $c_i$ for $i<n$ and such that $f_n^{-1}(c_n)$ is integral.  With these choices conditions \ref{distinctpoints}\ndash\ref{nofatextensions} are satisfied for all $m<n+1$. 
	
         Let $F$ denote the union $\bigcup_n F_n$. We claim that $F$ is Hilbertian. Indeed, suppose we are given a covering of $f\colon X \to \PP^1$ defined over $F$, with $X$ a smooth geometrically integral curve and $\deg f \geqslant 2$. Since a covering is defined by finitely many equations, $f$ will be defined over $F_n$ for some $n$. By condition \ref{coveringorder} and the definition of $\calB$ infinitely many of the coverings $f_m$ are equal to $f$. Let $c_{n_1}, c_{n_2}, \ldots \in \PP^1(F)$ be the corresponding sequence of points. By condition \ref{Hilbert} the schemes $f^{-1}(c_{n_i})$ are integral over $F$. Therefore $F$ is Hilbertian.
On the other hand, conditions \ref{fatorder} and \ref{nofatextensions} ensure that there is no vast extension $L/K$ in $\calA$ such that $L$ is linearly disjoint from $F$ over $K$.
Finally, by condition \ref{nofatextensions} the degree of $F$ divides the product of degrees of extensions from $\calA$.
\end{proof} 

Before proving Theorem \ref{Odonifail} we recall some group-theoretic properties of the profinite group $\Aut \mathrm{T}_\infty(d)$. Consider the action of $\Aut \mathrm{T}_\infty(d)$ on the $n$-th level of the tree. The sign of this action defines a homomorphism $\sigma_n\colon \Aut \mathrm{T}_\infty(d) \to \Z/2\Z$. Note that a simple transposition on the $n$-th level of the tree lifted arbitrarily to an element $g \in \Aut \mathrm{T}_\infty(d)$ satisfies $\sigma_m(g)=0$ for $m<n$ and $\sigma_n(g)=1$, therefore the homomorphisms $\sigma_n$ are linearly independent in the $\F_2$-vector space $\Hom(\Aut \mathrm{T}_\infty(d), \Z/2\Z)$.
Thus the collection of all sign homomorphisms defines a homomorphism $\sigma \colon \Aut \mathrm{T}_\infty(d) \to \prod_{n=1}^{\infty} \Z/2\Z$ whose image is dense in the product topology.
Since $\sigma$ is a continuous homomorphism of profinite groups, it is then surjective.
We use $\phi_n$ to denote the homomorphism $\phi_n\colonequals(\sigma_n, \sigma_{n+1}, \dots) \colon \Aut \mathrm{T}_\infty(d) \to \prod_{k=n}^\infty \Z/2\Z$.

\begin{definition}\label{discriminants}
  Suppose $K$ is a field and $f \in K[x]$ a monic polynomial of degree $d \geqslant 2$ such that $\phi_f \colon \Gal_K \to \Aut \mathrm{T}_\infty(d)$ has image of finite index.
  The $n$-th \slantsf{discriminant extension} $K_n$ of $K$ attached to $f$ is the algebraic extension of $K$ corresponding to the kernel of $\phi_n \circ \phi_f$ in $\Gal_K$.
\end{definition}

\begin{remark}
  The homomorphism $\phi_n\circ \phi_f$ of Definition \ref{discriminants} is not necessarily surjective.
  However, since the image of $\phi_f$ has finite index in $\Aut \mathrm{T}_\infty(d)$ by assumption, the image of $\phi_n\circ \phi_f$ has finite index in $\prod_{k=1}^{\infty} \Z/2\Z$, and so $\im \phi_n\circ \phi_f = \Gal K_n/K \simeq \prod_{i=1}^{\infty} \Z/2\Z$.
\end{remark}
\begin{remark}
	The term ``discriminant extension'' comes from the relation between the sign homomorphism and discriminants of polynomials; see \cite[Section 7.4.A]{Cox2011}.
\end{remark}

\begin{proof}[Proof of Theorem \ref{Odonifail}]
  Consider the set $S$ consisting of pairs $(K, f)$, where $K/k$ is a finite extension and $f \in K[X]$ is a monic polynomial of degree at least $2$ such that the image of $\phi_f \colon \Gal_K \to \Aut \mathrm{T}_\infty(\deg f)$ has finite index.
  Let
  \[ \calA \colonequals \{ K_n/K \colon (K, f) \in S,\ K_n \text{ is the $n$-th discriminant extension of $K$ attached to $f$ for some $n$} \}. \]
  Since every $\prod_{n=1}^{\infty} \Z/2\Z$-extension is vast, the collection $\calA$ satisfies the assumptions of Theorem \ref{MainTheorem}.
  Therefore there exists a Hilbertian extension $F/k$ of $2$-power degree such that $F$ is not linearly disjoint from any $K_n/K \in \calA$ with $K \subset F$.

  We wish to show that $F$ is as desired, so suppose for a contradiction that $f \in F[X]$ is a monic polynomial of degree at least $2$ with finite index arboreal representation over $F$.
  We can choose $K \subset F$ finite over $k$ with $f \in K[X]$, so that $(K, f) \in S$.
  The fields $K_n$ attached to $f$ (over $K$) are nested $K_1 \supset K_2 \supset \dots$ and $\bigcap_n K_n = K$.
  Since $F/K$ is not linearly disjoint from $K_n$ for any $n$, $F \cap K_1$ is an infinite extension of $K$.
  Therefore the arboreal representation of $f$ over $F$ has infinite index in $\Aut \mathrm{T}_\infty (\deg f)$, giving the desired contradiction.
\end{proof}


\section{A model-theoretic construction of a counterexample}
\label{sec:modelTheory}

The iterative construction from Theorem \ref{MainTheorem}, constructing an algebraic extension which step-by-step forces all polynomials to induce arboreal representations with image of infinite index, can also naturally be understood model-theoretically using the Omitting Types Theorem.
In this section we will therefore give another construction of counterexamples to Conjecture \ref{Odoni}, using standard tools from field arithmetic.

\begin{theorem}\label{thm:modelTheoreticFail}
  Let $K$ be a countable field of characteristic zero.
  There exists a Hilbertian pseudo-algebraically closed extension field $L$ of $K$ such that every monic polynomial over $L$ of degree $d \geqslant 2$ induces an arboreal representation whose image has infinite index in $\Aut \mathrm{T}_\infty(d)$.
  If $K$ is Hilbertian, we can choose $L$ to be algebraic over $K$.
\end{theorem}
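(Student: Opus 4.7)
The plan is to mimic the iterative construction of Theorem~\ref{MainTheorem} in a model-theoretic setting, replacing the countable enumeration of vast extensions with a countable collection of first-order types to be omitted. I would work in the language of rings augmented with constants for every element of $K$, and let $T$ be the first-order theory whose models are characteristic-zero PAC fields extending $K$. This theory is axiomatizable via the classical Ax axiom scheme: for each absolutely irreducible polynomial over $K$, assert the existence of a zero. Consistency of $T$ is witnessed by any PAC extension of $K$ — for instance, the algebraic field $F$ produced by Theorem~\ref{MainTheorem} followed by a PAC closure, or by any standard Jarden-type existence result for PAC fields over countable fields.

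For each triple $(K', f, n)$ with $K' \subseteq \overline{K}$ a finite extension of $K$, $f \in K'[x]$ of degree at least $2$, and $n \geq 1$, I would introduce a partial type $p_{K', f, n}$ encoding the assertion that $K' \subseteq L$ and that the $n$-th discriminant subextension $K'_n$ of Definition~\ref{discriminants} attached to $f$ is entirely contained in $L$. The collection of such types is countable, and omitting all of them forces every arboreal representation over $L$ to have infinite-index image, by the same argument used at the end of the proof of Theorem~\ref{Odonifail}.

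The central content is the \emph{non-principality} of each $p_{K', f, n}$ over $T$: no single formula consistent with $T$ should pin down $K'_n$ inside a model. This is exactly where the earlier work is reused — the vastness of $K'_n/K'$ provided by Lemma~\ref{vastbasechange} (equivalently, the flexibility underlying Theorem~\ref{MainTheorem}) ensures that any formula $\phi$ consistent with $T$ admits a model of $T \cup \{\phi\}$ that avoids containing $K'_n$. The Omitting Types Theorem then yields a countable $L \models T$ omitting every $p_{K', f, n}$; by construction $L$ is PAC, and Hilbertianess follows from the standard result that a PAC field whose absolute Galois group is sufficiently rich — in particular, not procyclic — is automatically Hilbertian, a condition the omission arrangement is designed to enforce. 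In the case that $K$ itself is Hilbertian, one additionally omits the types of elements transcendental over $K$, yielding $L$ algebraic over $K$.

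The hard part will be making the model-theoretic bookkeeping precise: axiomatizing PAC cleanly in first-order logic, checking consistency of each enlarged theory $T \cup \{\phi\}$ relative to a chosen $p_{K', f, n}$, and recovering Hilbertianess a posteriori from the structure of $\Gal_L$. These are standard but nontrivial inputs from the model theory of PAC fields, drawing on Ax's axiomatization and on the structural results linking projectivity of the absolute Galois group, PAC, and Hilbertianess; the advantage over Section~\ref{sec:explicitConstruction} is that the combinatorics of the explicit construction gets repackaged into a single application of the Omitting Types Theorem.
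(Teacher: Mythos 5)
Your proposal captures the right general shape (apply Omitting Types to a suitable theory $T$ and a countable family of types), and the final step about omitting the type of transcendental elements is in the same spirit as the paper's Lemma~\ref{lem:nonAlgNotSupported}. But three of the key choices are either wrong or do not work as stated.

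\textbf{The theory $T$ and the recovery of Hilbertianity.} You take $T$ to be the theory of characteristic-zero PAC extensions of $K$ and hope to recover Hilbertianity afterwards from the structure of $\Gal_L$, citing that a PAC field with non-procyclic absolute Galois group is Hilbertian. That last claim is false: for PAC fields, Hilbertianity is equivalent to $\omega$-freeness of the absolute Galois group (\cite[Theorem 5.10.3]{Jarden2011}), a condition vastly stronger than non-procyclicity (e.g.\ a free profinite group of finite rank $\geq 2$ is not $\omega$-free, and PAC fields with such Galois groups exist but are not Hilbertian). Moreover, nothing in the omission arrangement you describe pushes $\Gal_L$ towards $\omega$-freeness. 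The paper instead builds Hilbertianity into $T$ from the start: $T$ is the $\mathcal L$-theory of \emph{Hilbertian} PAC field extensions of $K$, which is first-order axiomatizable precisely because of the $\omega$-freeness characterization. This choice is essential, since the model completeness result (Lemma~\ref{lem:modelCompleteness}, from \cite[Theorem 27.2.3]{Fried-Jarden2008}) and the embedding lemmas are results about $\omega$-free PAC fields.

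\textbf{The types to be omitted.} Your types $p_{K',f,n}$ assert that $K'_n\subseteq L$. But if $K'_n\subseteq L$, the image of the arboreal representation over $L$ lands in $\ker\phi_n$, which has \emph{infinite} index --- so $K'_n\subseteq L$ is already the \emph{good} situation, and omitting these types points in exactly the wrong direction. (If you wanted to exploit $K'_n\subseteq L$ you would have to \emph{realize} those types, and realizing a countable family of non-isolated types is generally impossible in a single countable model.) The paper's types $p_{d,n}(x_0,\dots,x_{d-1})$ (Lemma~\ref{lem:typeAlmostSurjRep}) directly assert, with free variables for the coefficients of a degree-$d$ polynomial, that the associated arboreal image has index $\leq n$; omitting all $p_{d,n}$ over the theory of Hilbertian PAC extensions of $K$ is what forces infinite index for every polynomial over $L$. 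This also avoids the language issue that your $p_{K',f,n}$ is parametrized by a polynomial $f$ and a field $K'$ over $\overline K$ which have no names in $\mathcal L$.

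\textbf{The non-support argument.} You suggest reusing the ``vastness'' machinery of Section~\ref{sec:explicitConstruction} to show the types are unsupported. The paper's Lemma~\ref{lem:almostSurjNotSupported} uses a genuinely different mechanism: given a candidate support $\varphi$, reduce via Lemma~\ref{lem:modelCompleteness} to a normal form, find a model with witnesses, pass to an algebraic extension with \emph{finitely generated} absolute Galois group in which the relevant quantifier-free data is preserved (a finitary matter), observe that $\Aut \mathrm{T}_\infty(d)$ is not finitely generated (because it surjects onto $\prod_{k\geq 1}\Z/2\Z$), and then regular-extend to a model of $T$ via \cite[Theorem 27.2.3]{Fried-Jarden2008}. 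No version of vastness enters, and it is unclear how it could: vastness speaks about field-theoretic disjointness, not about consistency of a formula with $T$ together with a model-completeness normal form, which is what is actually needed here. As written, your Lemma-3-analogue has no proof.
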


\begin{remark}\label{rem:comparison}
  The fact that the resulting field $L$ is pseudo-algebraically closed should be seen as an artefact of the construction.
  In particular, this forces $L$ to have projective absolute Galois group (see \cite[Theorem 11.6.2]{Fried-Jarden2008}).
  If $K$ is a number field and $L/K$ is algebraic, this means that the degree of $L/K$ as a supernatural number must be divisible infinitely many times by every prime number for reasons of cohomological dimension (see \cite[Proposition 3.3.5]{Neukirch-Schmidt-Wingberg2008}), in contrast to the construction in the previous section.
\end{remark}

\begin{remark}\label{rem:firstProofImpliesSecond}
  Let $K$ be a countable field of characteristic zero.
  If $K$ is not Hilbertian, replace it by its Hilbertian extension $K(t)$.
  Then $K$ has an algebraic extension $K'$ which is Hilbertian and pseudo-algebraically closed (see \cite[Theorem 27.4.8]{Fried-Jarden2008}),
  and the proof in the previous section yields an algebraic extension $L/K'$ in which every monic polynomial of any degree $d \geqslant 2$ induces an arboreal representation whose image has infinite index in $\Aut \mathrm{T}_\infty(d)$.
  This field $L$ is then pseudo-algebraically closed as an algebraic extension of $K'$ \cite[Corollary 11.2.5]{Fried-Jarden2008}.
  Therefore the proof in the previous section yields Theorem \ref{thm:modelTheoreticFail} as a corollary.
  We nevertheless think that the separate proof below is interesting in its own right.
\end{remark}

We use basic model-theoretic terminology, with \cite{Hodges1997} as our general reference, although other textbooks such as \cite{Marker} also contain all necessary results.

We work in the first-order language of rings, i.e.~with symbols $+, -, \cdot, 0, 1$, later expanded by constants.
Let us introduce some terminology for sets $p(\underline x)$ consisting of formulae with free variables among the (finite) tuple of variables $\underline x$.
We say that a tuple $\underline a$ in a structure $\mathfrak M$ \emph{realises} $p$ if $\mathfrak M \models \varphi(\underline a)$ for all $\varphi \in p$.
Following \cite[Section 6.2]{Hodges1997}, we say that a formula $\varphi(\underline x)$ \emph{supports} the set $p(\underline x)$ in a theory $T$ if $T \cup \{ \exists \underline x \varphi \}$ has a model, and for every $\psi \in p$, $T \models \forall \underline x (\varphi \to \psi)$.\footnote{
  In \cite[Definition 4.2.1]{Marker} the terminology for this property is that $\varphi(\underline x)$ \emph{isolates} $p(\underline x)$, but there it is required throughout that $T \cup p(\underline x)$ be consistent (which is true in all interesting situations).
}
We say that the set $p(\underline x)$ is supported in $T$ if there exists a formula which supports it.

\begin{lemma}\label{lem:typeAlmostSurjRep}
  Fix $d \geqslant 2$, $n \geqslant 1$.
  There is a set $p_{d,n}(x_1, \dotsc, x_d)$ of formulae (in the language of rings) such that a tuple $(a_1, \dotsc, a_d)$ in a field $L$ realises $p_{d,n}$ if and only if the arboreal representation $\phi_f \colon \Gal_L \to \Aut \mathrm{T}_\infty(d)$ associated to $f = X^d + a_1X^{d-1} + \dotsb + a_d$ has image of index at most $n$ in $\Aut \mathrm{T}_\infty(d)$.
\end{lemma}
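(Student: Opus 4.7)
The plan is to reduce the condition ``the arboreal representation has image of index at most $n$ in $\Aut \mathrm{T}_\infty(d)$'' to a countable collection of first-order statements, one for each finite level of the tree, and then express each level's condition via classical Galois-theoretic resolvents.

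First, since $\Aut \mathrm{T}_\infty(d) = \varprojlim_k G_k$ with $G_k := \Aut \mathrm{T}_k(d)$ finite, and the image of $\sigma$ is a closed subgroup of this profinite group, its index equals the supremum of the indices of its images at each finite level $k$. Therefore, the image has index at most $n$ if and only if for every $k \geq 1$ its image at level $k$ has index at most $n$ in $G_k$; equivalently, $[G_k : \Gal(M_k/L)] \leq n$, where $M_k$ denotes the splitting field of $f^{\circ k}$ over $L$.

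Second, for each fixed $k$, this level-$k$ condition is first-order expressible in $a_0, \dotsc, a_{d-1}$. Since $G_k$ is finite there are only finitely many subgroups $H \leq G_k$ of index at most $n$, and the condition becomes a finite disjunction over such $H$ of ``$\Gal(M_k/L)$ is $G_k$-conjugate to $H$.'' By classical Galois theory (resolvent polynomials), the conjugacy class of the Galois group of a polynomial in $S_{d^k}$ is detected by the presence of roots in $L$ of suitable resolvent polynomials, whose coefficients are polynomial expressions in the coefficients of $f^{\circ k}$, and hence in $a_0, \dotsc, a_{d-1}$. Boolean combinations of these resolvent conditions therefore yield a single first-order formula $\varphi_k(x_0, \dotsc, x_{d-1})$ in the language of rings expressing the level-$k$ condition. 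Setting $p_{d,n}(x_0, \dotsc, x_{d-1}) := \{\varphi_k : k \geq 1\}$ then has the property claimed: a tuple $(a_0, \dotsc, a_{d-1})$ in $L$ realises $p_{d,n}$ iff $\varphi_k$ holds for every $k$, iff the image of the arboreal representation has index at most $n$.

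The main (minor) delicate point is the Galois-theoretic translation: encoding ``the Galois group of $f^{\circ k}$ lies in a prescribed conjugacy class of subgroups of $S_{d^k}$'' as a first-order condition on the coefficients. This is entirely standard via resolvent polynomials, but one must be careful to distinguish $S_{d^k}$-conjugacy from $G_k$-conjugacy. This is handled by enumerating the finitely many $G_k$-conjugacy classes of subgroups $H \leq G_k$ of index at most $n$, and for each writing a conjunction asserting that the appropriate resolvent has a root while finer resolvents (for proper subgroups of $H$) do not.
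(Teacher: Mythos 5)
Your proof follows the same core structure as the paper's: reduce the infinite-index condition to the conjunction, over all levels $k$, of a finite-level condition, and then observe that each level-$k$ condition is first-order in the coefficients. The two proofs diverge only in how that level-$k$ condition is expressed. You detect the $G_k$-conjugacy class of the image of $\sigma_k$ via resolvent polynomials, enumerate the finitely many subgroups $H \le G_k$ of index at most $n$, and take a boolean combination; this is workable but introduces the subtlety you yourself flag about $S_{d^k}$-conjugacy versus $G_k$-conjugacy, and the patch you sketch (conjunctions of resolvent conditions ruling out proper subgroups) would need more care to nail down. The paper takes a more economical route: it does not try to identify the Galois group at all, but merely expresses ``the splitting field of $f^{\circ k}$ has degree at least $\lvert \Aut \mathrm{T}_k(d)\rvert/n$.'' Since the image of $\sigma_k$ automatically lies inside $G_k = \Aut \mathrm{T}_k(d)$, the index bound is equivalent to this cardinality bound, and ``degree of splitting field $\ge m$'' is directly first-order expressible (for instance, by negating the statement that some extension of bounded degree, encoded by coefficients of a degree-$<m$ polynomial, splits $f^{\circ k}$). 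This sidesteps conjugacy classes and resolvents entirely. So: same decomposition, correct conclusion, but your encoding of the level-$k$ condition is heavier than needed and has a small unfinished point in the resolvent bookkeeping.
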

\begin{proof}
  For every $k \geqslant 1$, let us write $\mathrm{T}_k(d)$ for the part of $\mathrm{T}_\infty(d)$ up to level $k$.
  The image of $\phi_f$ has index at most $n$ if and only if the image of the finite stages $\Gal_L \to \Aut \mathrm{T}_k(d)$ has index at most $n$, for all $k$.
  Equivalently, the splitting field of $f^{\circ k}$ has degree at least $\lvert \Aut \mathrm{T}_k(d)\rvert / n$.
  Since the coefficients of $f^{\circ k}$ are polynomials in the $a_i$, this property is expressed by a first-order formula in the language of rings.
  Collecting these formulae for all $k$ yields the desired $p_{d,n}$.
\end{proof}

Let us now fix a countable field $K$ of characteristic zero, and work in the language $\mathcal{L}$ which is the language of rings together with constant symbols for each element of $K$.
Consider the $\mathcal{L}$-theory $T$ which consists of the theory of Hilbertian pseudo-algebraically closed fields (see \cite[Chapter 27]{Fried-Jarden2008}) and the diagram of $K$ (see \cite[Section 1.4]{Hodges1997}).
In this way, models of $T$ correspond to field extensions $L/K$ such that $L$ is Hilbertian and pseudo-algebraically closed.
For pseudo-algebraically closed fields, being Hilbertian is equivalent to the absolute Galois group satisfying a certain group-theoretic condition, called \emph{$\omega$-freeness}, see \cite[Theorem 5.10.3]{Jarden2011}.

Proving Theorem \ref{thm:modelTheoreticFail} now means showing that there exists a model $L$ of $T$ in which none of the sets $p_{d,n}$ from Lemma \ref{lem:typeAlmostSurjRep} is realised by any tuple.
(Let us ignore the additional condition that $L/K$ be algebraic if $K$ is Hilbertian for the moment.)
We will apply the following standard tool from model theory, which applies to any consistent theory $T$ in a countable language $\mathcal{L}$, thus in particular in our setting:

\begin{theorem}[{Omitting types, \cite[Theorem 6.2.1]{Hodges1997}; see also \cite[Theorem 4.2.4]{Marker}}]\label{thm:omittingTypes}
  For each $i \in \mathbb{N}$, let $p_i$ be a set of $\mathcal{L}$-formulae in free variables $x_1, \dotsc, x_{n_i}$, such that no $p_i$ is supported in $T$.
  Then there exists a model of $T$ realising none of the $p_i$, i.e.~the $p_i$ are \emph{omitted}.
\end{theorem}

We thus wish to show that none of the sets $p_{d,n}$ from Lemma \ref{lem:typeAlmostSurjRep} are supported in $T$.
We first isolate two important properties of $T$ from \cite[Theorem 27.2.3]{Fried-Jarden2008}, rephrasing the model-theoretic terminology there in more elementary terms.
\begin{lemma}\label{lem:modelCompleteness}
  Let $\varphi(\underline x)$ be an $\mathcal{L}$-formula.
  Then $\varphi$ is equivalent modulo $T$ to a formula of the form $\exists \underline y (\psi(\underline x, \underline y))$, where $\psi$ is a positive boolean combination of polynomial equalities and formulae of the form $\forall z (z^m + t_1z^{m-1} + \dotsb + t_m \neq 0)$, where the $t_i$ are polynomial expressions in $\underline x, \underline y$ with coefficients from $K$.
\end{lemma}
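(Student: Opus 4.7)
The plan is to reduce, via compactness, to the claim that the $\Sigma$-type determines the complete $\mathcal{L}$-type in models of $T$, where $\Sigma$ denotes the class of formulae of the allowed form. First, $\Sigma$ is closed under finite conjunctions and disjunctions by pulling the outer existential quantifiers out over disjoint variable blocks. Although the inner $\psi$ must be a \emph{positive} boolean combination, the outer existential gives $\Sigma$ access to polynomial inequalities $p \neq 0$ (expressed as $\exists y (py - 1 = 0)$) and to ``polynomial has a root'' statements $\exists x (f(x) = 0)$; in particular, the negation of every $\Sigma$-atom is itself $\Sigma$-equivalent. Hence $\Sigma$ effectively expresses arbitrary boolean combinations of polynomial (in)equalities and ``polynomial has/has no root'' assertions with parameters, and a standard compactness argument reduces the lemma to the following: for any $L_1, L_2 \models T$ and tuples $\underline a \in L_1^n$, $\underline b \in L_2^n$ realising the same $\Sigma$-formulae over $K$, the tuples $\underline a$ and $\underline b$ realise the same complete $\mathcal{L}$-type.

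From the shared $\Sigma$-type I extract first a $K$-algebra isomorphism $K[\underline a] \isom K[\underline b]$ using polynomial (in)equalities, and then an induced isomorphism of the relative algebraic closures $E_1 \subset L_1$ of $K(\underline a)$ and $E_2 \subset L_2$ of $K(\underline b)$ using the ``has root''/``no root'' atoms. Concretely, for every polynomial $g(X)$ over $K(\underline a)$ the $\Sigma$-type records whether $g$ has a root in $L_1$ and, via auxiliary existentials demanding distinct roots, the exact number of its roots and hence the full factorisation type of $g$ over $L_1$; this pins down $E_1$ together with its embedding into $\overline{K(\underline a)}$ up to the action of $\Gal(\overline{K(\underline a)}/K(\underline a))$.

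To conclude I invoke the classical model theory of PAC fields. Hilbertianity together with the PAC property forces both $L_1$ and $L_2$ to have $\omega$-free absolute Galois groups (see \cite[Theorem 5.10.3]{Jarden2011}). The Elementary Equivalence Theorem of Cherlin--van den Dries--Macintyre for PAC fields (see \cite{Fried-Jarden2008}) then yields $L_1 \equiv_{E_1} L_2$ after identifying $E_1 \isom E_2$, so that $\underline a$ and $\underline b$ realise the same complete $\mathcal{L}$-type over $K$. The main obstacle in carrying out this plan is the middle step: verifying that the splitting data encoded by the $\Sigma$-type really captures the algebraic part $E_1 \subset L_1$ to the precision demanded by the equivalence theorem. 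This ultimately reduces to the standard fact that, for a PAC field $L_1$, the embedding of its relative algebraic closure over $K(\underline a)$ into $\overline{K(\underline a)}$ is determined up to conjugacy by the factorisation types of all polynomials with coefficients in $K(\underline a)$.
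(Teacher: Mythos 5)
The paper proves this lemma by a single citation to \cite[Theorem 27.2.3]{Fried-Jarden2008}, so you are attempting to actually re-prove the cited result rather than locate it. Your plan — reduce via the separation lemma to a type-preservation statement, extract the relative algebraic closures from the $\Sigma$-type, and invoke the Embedding Lemma / Elementary Equivalence Theorem for $\omega$-free PAC fields — is indeed the strategy underlying the Fried--Jarden proof, so the direction is sound.

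However, there is a real gap in the compactness step. You reduce to showing: if $\underline a$ and $\underline b$ realise \emph{the same} $\Sigma$-formulae, then they have the same complete type. The separation lemma applied to this symmetric hypothesis yields only that $\varphi$ is equivalent modulo $T$ to a \emph{boolean combination} of $\Sigma$-formulae, not to a $\Sigma$-formula. The class $\Sigma = \{\exists \underline y\, \psi : \psi$ positive boolean in the allowed atoms$\}$ is not closed under negation: $\neg\,\exists \underline y\,\psi$ prenexes to a genuine $\forall\exists$ statement even though the negations of the individual atoms are $\Sigma$-expressible, so a boolean combination of $\Sigma$-formulae does not automatically collapse into $\Sigma$-form. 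To obtain the sharp conclusion claimed in the lemma you need the \emph{directional} separation lemma: show that whenever every $\Sigma$-formula satisfied by $\underline a$ is satisfied by $\underline b$, one has $L_1 \models \varphi(\underline a) \Rightarrow L_2 \models \varphi(\underline b)$. Concretely, you should verify that containment of $\Sigma$-types already forces equality — that is, that $\Sigma$-types are maximal. Your own observations about encoding the full factorisation data in $\Sigma$-formulae essentially give this (one can express ``$g$ has exactly $k$ roots'' by exhibiting a factorisation $g = (X-y_1)\cdots(X-y_k)h$ with the $y_i$ pairwise distinct and $h$ rootless, which is a $\Sigma$-formula), but this needs to be stated and used with the asymmetric hypothesis; as written, the reduction does not justify the conclusion. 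The remaining steps (identifying $E_1 \cong E_2$ up to conjugacy, and applying the EET for $\omega$-free PAC fields) are reasonable, though the compatibility of the chosen isomorphism $\Gal L_1 \cong \Gal L_2$ with the restriction maps to $\Gal(E_i)$ — which the Embedding Lemma requires — is an additional point you should address rather than absorb into ``the classical model theory of PAC fields.''
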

\begin{proof}
  This is a relatively straightforward translation of the model completeness part of \cite[Theorem 27.2.3]{Fried-Jarden2008}.
  Let us explain this in detail.

  The theorem cited works in an extended language $\mathcal{L'}$ for fields, containing not only symbols $+, -$ and $\cdot$, but also an additional $n$-ary predicate $R_n$ for every $n \geqslant 2$, and states in particular that the theory $T'$ consisting of $T$ and the additional axioms
  \[ \forall x_1, \dotsc, x_n \big( R_n(x_1, \dotsc, x_n) \leftrightarrow \exists z (z^n + x_1z^{n-1} + \dotsb + x_n = 0)\big) \]
  for all $n$ (specifying the intended interpretation for $R_n$) is \emph{model complete}.

  By \cite[Theorem 7.3.1]{Hodges1997}, this implies that the formula $\neg\varphi(\underline x)$ is equivalent modulo $T'$ to a universal $\mathcal{L'}$-formula $\theta_0(\underline x)$, i.e.\ a formula built by universal quantification, $\wedge$ and $\vee$ from atomic formulae and their negations.
  Here atomic formulae are equations between terms (i.e. polynomial expressions in the variables and constants), as well as formulae of the form $R_n(t_1, \dotsc, t_n)$ with terms $t_i$.

  Taking negations, $\varphi(\underline x)$ is equivalent modulo $T'$ to $\neg\theta_0(\underline x)$, which in turn is equivalent to an existential formula $\theta_1(\underline x)$, i.e.\ one built from atomic formulae and their negations using existential quantification, $\wedge$ and $\vee$.
  We may eliminate all negated equalities $t_1 \neq t_2$ between terms by rewriting them as $\exists z (zt_1 = zt_2 + 1)$.
  Now replace all occurrences of $\neg R_n(t_1, \dotsc, t_n)$ in $\psi_1$ by $\forall z(z^n + x_1z^{n-1} + \dotsb + x_n \neq 0)$ and all occurrences of $R_n(t_1, \dotsc, t_n)$ by $\exists z(z^n + x_1z^{n-1} + \dotsb + x_n = 0)$ to obtain an $\mathcal{L}$-formula $\theta_2(\underline x)$.
  Moving all existential quantifiers to the front of the formula, $\theta_2$ is equivalent to an $\mathcal{L}$-formula $\theta_3 = \exists \underline y(\psi(\underline x, \underline y))$ with $\psi$ of the required shape.

  We have seen that $\varphi$ is equivalent to $\theta_3$ modulo $T'$.
  Since $T'$ is a so-called definitional expansion of $T$, i.e.\ only adds axioms specifying the interpretation of additional relation symbols, $\varphi$ and $\theta_3$ are actually already equivalent modulo $T$, since any model of $T$ in which they are not equivalent could be expanded to a model of $T'$ \cite[Theorem 2.6.4(a)]{Hodges1997}.
\end{proof}

\begin{lemma}\label{lem:existsRegularExtension}
  For every extension $L/K$ there exists a regular extension $F/L$ with $F \models T$.
\end{lemma}
\begin{proof}
  This is a translation of another part of \cite[Theorem 27.2.3]{Fried-Jarden2008}, as we now explain.
  By the statement about model companions given there, $L$ embeds into a model $F$ of $T$ such that the embedding $L \hookrightarrow F$ not only preserves addition and multiplication, but embeds $L$ as a relatively algebraically closed subfield of $F$, see \cite[top of p.~660]{Fried-Jarden2008}.
  Identifying the embedding with an inclusion, this means that $F/L$ is regular since the characteristic is zero.
\end{proof}

We now prove the key technical statements for the proof of Theorem \ref{thm:modelTheoreticFail}.
\begin{lemma}\label{lem:almostSurjNotSupported}
  For any fixed $d \geqslant 2$ and $n$, the set $p_{d,n}$ from Lemma \ref{lem:typeAlmostSurjRep} is not supported in $T$.
\end{lemma}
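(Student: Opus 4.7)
The plan is to assume for contradiction that $\varphi(\underline{x})$ supports $p_{d,n}$ in $T$, and to construct a model $L \models T$ with a realisation $\underline{a}$ of $\varphi$ for which $f_{\underline{a}}$ has arboreal image of index strictly greater than $n$ in $\Aut \mathrm{T}_\infty(d)$. First I would apply Lemma \ref{lem:modelCompleteness} to replace $\varphi$ by an equivalent formula $\exists \underline{y}\,\psi(\underline{x},\underline{y})$ of the explicit form described there, written in disjunctive normal form. Consistency of $\varphi$ with $T$ produces a realisation $(\underline{a},\underline{b})$ of $\psi$ in some Hilbertian PAC field $L_0/K$; after fixing a disjunct $\psi_0$ of $\psi$ that $(\underline{a},\underline{b})$ satisfies in $L_0$, I set $E = K(\underline{a},\underline{b})$. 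The polynomial equalities of $\psi_0(\underline{a},\underline{b})$ then hold in $E$ by mere field arithmetic, while the ``no root'' clauses are universal statements and so descend from $L_0$ to the subfield $E$.

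Let $h_1,\ldots,h_r \in E[z]$ be the polynomials appearing in the ``no root'' clauses of $\psi_0(\underline{a},\underline{b})$, and let $F$ be the Galois closure over $E$ of the finite extension generated by all their roots. Let $E_\infty \subseteq \overline{E}$ denote the arboreal tower of $f_{\underline{a}}$ over $E$. Because $\underline{a}$ realises $p_{d,n}$ in $L_0 \supseteq E$, the arboreal image $\Gal(E_\infty/E) \subseteq \Aut \mathrm{T}_\infty(d)$ has index at most $n$; since $\Aut \mathrm{T}_\infty(d)$ is infinite, the extension $E_\infty/E$ itself is infinite. A routine Galois-theoretic argument---exploiting that $\Gal(E_\infty/E)$ is infinite profinite while the quotient $\Gal((E_\infty \cap F)/E)$ is finite---then produces a finite subextension $E'/E$ of $E_\infty/E$ with $E' \cap F = E$ and $[E':E] \cdot [\Aut \mathrm{T}_\infty(d) : \Gal(E_\infty/E)] > n$.

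Next I would invoke a field-arithmetic construction, modelled on the proof of \cite[Theorem 27.4.8]{Fried-Jarden2008} and using for instance Haar-random elements of $\Gal(\overline{E'}/E')$ restricted away from the finitely many open subgroups corresponding to proper subfields of $FE'/E'$, to obtain a Hilbertian PAC algebraic extension $L$ of $E'$ that is linearly disjoint from $FE'$ over $E'$. Then $L$ is a model of $T$ containing $\underline{a},\underline{b}$. Linear disjointness gives $L \cap FE' = E'$, whence $L \cap F \subseteq E' \cap F = E$, so no $h_i$ acquires a new root in $L$; thus $(\underline{a},\underline{b})$ still satisfies $\psi_0$ in $L$ and $L \models \varphi(\underline{a})$. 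On the other hand, $\Gal_L \subseteq \Gal_{E'}$ has arboreal image contained in $\Gal(E_\infty/E')$, whose index in $\Aut \mathrm{T}_\infty(d)$ is $[E':E] \cdot [\Aut \mathrm{T}_\infty(d) : \Gal(E_\infty/E)] > n$. Hence $\underline{a}$ fails to realise $p_{d,n}$ in $L$, contradicting the supporting property of $\varphi$.

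The main obstacle is the field-arithmetic step producing $L$: while the existence of Hilbertian PAC algebraic extensions per se is standard, arranging the prescribed linear disjointness from $FE'$ while retaining both Hilbertianity and pseudo-algebraic-closedness requires care and is where the measure-theoretic machinery of random Frobenius elements from Jarden's work is needed. The remainder of the argument is bookkeeping of which clauses of $\psi_0$ are preserved under which field operations---equalities going up, ``no root'' clauses going down and persisting under extensions linearly disjoint from the relevant root-field $F$.
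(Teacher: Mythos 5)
Your high-level plan matches the paper's: pass from the initial witness $(\underline a,\underline b)$ in a model of $T$ to a new model of $T$ where $\psi(\underline a,\underline b)$ still holds but the arboreal image attached to $f_{\underline a}$ has index exceeding $n$. However, the mechanism you use to enlarge the index differs from the paper's and contains a genuine gap.

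The paper's route is shorter and more robust: from $L_0$ it passes to an algebraic extension $L'$ with \emph{topologically finitely generated} absolute Galois group (merely avoiding roots of the finitely many ``no root'' polynomials of $\psi$, so $\psi$ still holds), notes that $\Aut\mathrm{T}_\infty(d)$ is not topologically finitely generated because it surjects onto $\prod_{k\geq 1}\Z/2\Z$, so the image of $\Gal_{L'}$ automatically has \emph{infinite} index, and then cites \cite[Theorem 27.2.3]{Fried-Jarden2008} for a regular extension $F/L'$ with $F\models T$. Regularity makes $\Gal_F\to\Gal_{L'}$ surjective and also automatically preserves all the ``no root'' clauses (regular extensions are linearly disjoint from every algebraic extension), so no bookkeeping of the field $F$ of roots is needed. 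The index-inflation and the linear-disjointness problem you worry about simply disappear.

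The gap in your argument is the step you describe as ``a routine Galois-theoretic argument\dots exploiting that $\Gal(E_\infty/E)$ is infinite profinite while the quotient $\Gal((E_\infty\cap F)/E)$ is finite.'' The claim that an infinite profinite group $G$ with open normal subgroup $H$ always admits open subgroups $N$ of arbitrarily large index with $NH=G$ is false in general: take $G=\Z_2$ and $H=2\Z_2$, where the only open $N$ with $NH=G$ is $N=G$ itself. To make your step work you would need to exploit the specific structure of finite-index subgroups of $\Aut\mathrm{T}_\infty(d)$ --- for instance that any such subgroup and any of its open normal subgroups surject onto a tail $\prod_{k\geq k_0}\Z/2\Z$ of the sign quotient, which furnishes the required $N$ as a kernel of a map to $(\Z/2\Z)^m$ --- but this is precisely the ingredient you omit, and it is essentially the same observation (non-finite-generation via $\prod\Z/2\Z$) that the paper deploys more directly. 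Secondarily, your insistence on constructing an \emph{algebraic} Hilbertian PAC extension of $E'$ linearly disjoint from $FE'$ is unnecessary over-engineering: a regular extension of $E'$ satisfying $T$, as provided by \cite[Theorem 27.2.3]{Fried-Jarden2008}, works and gives the linear disjointness for free.
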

\begin{proof}
  Let $\varphi(\underline x)$ be an arbitrary $\mathcal{L}$-formula such that $T \cup \{ \exists \underline x(\varphi(\underline x)) \}$ is consistent; we show that $\varphi$ does not support $p_{d,n}$ in $T$.
  We may assume that $\varphi = \exists \underline y(\psi(\underline x, \underline y))$ as in Lemma \ref{lem:modelCompleteness}.
  Since $T \cup  \{ \exists \underline x \varphi \}$ is consistent, there is a Hilbertian pseudo-algebraically closed field $L/K$ with elements $\underline x, \underline y$ such that $L \models \psi(\underline x, \underline y)$.
  We can choose an algebraic extension $L'/L$, with $\Gal_{L'}$ finitely generated, such that $L' \models \psi(\underline x, \underline y)$, since we only need to ensure that the finitely many polynomials $Z^m + t_1 Z^{m-1} + \dotsb + t_m$ mentioned in $\psi$ which have no root in $L$ do not have any root in $L'$.
  Then the arboreal representation $\phi_f \colon \Gal_{L'} \to \Aut \mathrm{T}_\infty(d)$ associated to $f = X^d + x_1X^{d-1} + \dotsb + x_m$ has image of infinite index, since $\Aut \mathrm{T}_\infty(d)$ is not finitely generated because of the surjective homomorphism $\Aut \mathrm{T}_\infty(d) \to \prod_{k=1}^\infty \Z/2\Z$.

  By Lemma \ref{lem:existsRegularExtension}, there exists a regular extension $F/L'$ with $F \models T$.
  Since the representation $\Gal_F \to \Aut \mathrm{T}_\infty(d)$ associated to $f$ factors through $\Gal_{L'}$, its image has infinite index, so the tuple $\underline x$ does not realise $p_{d,n}$ in $F$.
  As $F \models \varphi(\underline x)$ by construction, $\varphi$ does not support $p_{d,n}$.
\end{proof}

\begin{lemma}\label{lem:nonAlgNotSupported}
  Assume in addition that the fixed countable field $K$ is Hilbertian.
  Let $p(x)$ be the set of formulae in one variable which assert that $x$ satisfies no nontrivial polynomial relation over $K$.
  Then $p$ is not supported in $T$.
\end{lemma}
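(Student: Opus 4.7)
The plan is to mirror the proof of Lemma \ref{lem:almostSurjNotSupported}, with Hilbert's irreducibility theorem used to specialize a transcendental witness of $\varphi$ to an algebraic one. Let $\varphi(x)$ be an $\mathcal{L}$-formula with $T \cup \{\exists x\, \varphi(x)\}$ consistent; it suffices to produce $M \models T$ and $a_1 \in M$ algebraic over $K$ with $M \models \varphi(a_1)$. By Lemma \ref{lem:modelCompleteness} I may assume $\varphi(x) = \exists \underline{y}\, \psi(x, \underline{y})$, where $\psi$ is a positive Boolean combination of polynomial equalities and formulae of the form $\forall Z\,(Z^m + t_{m-1}Z^{m-1} + \dotsb + t_0 \neq 0)$ with $t_i \in \Z[x, \underline{y}]$. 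Pick $L \models T$ and $a_0, \underline{y}_0 \in L$ realising $\psi$. Both polynomial equalities and universal formulae are preserved under passage to a subfield, and these preservations survive positive Boolean combinations, so $\psi(a_0, \underline{y}_0)$ continues to hold in the finitely generated subfield $E \colonequals K(a_0, \underline{y}_0)$.

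I would now apply Hilbert's irreducibility theorem to specialize. Let $V$ be the Zariski closure of $(a_0, \underline{y}_0)$ in $\Aff^{n+1}_K$: an integral $K$-variety with function field $E$. Let $K' \colonequals \Kbar \cap E$ denote the relative algebraic closure of $K$ in $E$, a finite (hence Hilbertian) extension of $K$, over which a suitable model becomes a geometrically integral $K'$-variety $V'$ with $K'(V') = E$. Each of the finitely many polynomials $P_k(x, \underline{y}, Z) \in E[Z]$ appearing in $\psi$ factors over $E$ into monic irreducibles of degree at least $2$, since by assumption $P_k(a_0, \underline{y}_0, Z)$ has no root in $E$. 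Applying Hilbert irreducibility over $K'$ to all these irreducible factors simultaneously yields a rational point $(a_1, \underline{y}_1) \in V'(K')$ at which each factor specializes to an irreducible polynomial in $K'[Z]$ of degree at least $2$. Consequently, $P_k(a_1, \underline{y}_1, Z)$ has no root in $K'$, and the polynomial equalities imposed by $\psi$ survive because they are identities on $V'$; so $K' \models \psi(a_1, \underline{y}_1)$.

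Finally, I would invoke \cite[Theorem 27.2.3]{Fried-Jarden2008} to obtain a regular extension $M/K'$ that is Hilbertian and pseudo-algebraically closed, so $M \models T$. Regularity means $K'$ is algebraically closed in $M$, so any polynomial in $K'[Z]$ of degree at least $2$ with no root in $K'$ remains without a root in $M$. Consequently $M \models \psi(a_1, \underline{y}_1)$, whence $M \models \varphi(a_1)$. Since $a_1 \in K' \subseteq \Kbar$ is algebraic over $K$, this element witnesses that $\varphi$ does not support $p$. The main technical hurdle will be the application of Hilbert irreducibility on the possibly non-geometrically-integral $K$-variety $V$; descending to the constant field $K'$ (Hilbertian because of the finiteness of $K'/K$) to restore geometric integrality is the standard device that makes the argument go through, and the final cosmetic use of a regular PAC extension is identical to the one in Lemma \ref{lem:almostSurjNotSupported}.
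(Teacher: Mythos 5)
Your approach is genuinely different from the paper's and, as written, has a real gap. The paper does not invoke Lemma~\ref{lem:modelCompleteness} or Hilbert irreducibility here at all: it takes $L\models T\cup\{\exists x\,\varphi(x)\}$, passes to a countable elementary substructure, and then applies \cite[Proposition~23.2.4]{Fried-Jarden2008} to obtain an \emph{algebraic} extension $L'/K$ which is PAC, has $\Gal_{L'}\cong\Gal_L$ (hence $\omega$-free, so $L'\models T$), and still satisfies $\exists x\,\varphi(x)$. Since every element of $L'$ is algebraic over $K$, the witness automatically omits $p$, and we are done. This is considerably shorter than a specialization argument and uses nothing about the syntactic shape of $\varphi$.

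The gap in your argument is the assertion that Hilbert irreducibility produces a point $(a_1,\underline y_1)\in V'(K')$. There is no reason for $V'(K')$ to be nonempty: $V'$ is an arbitrary geometrically integral $K'$-variety (for example a genus-zero curve without rational points, such as the conic $x^2+y^2=-1$ over $K'=\Q$ arising from a disjunct of $\psi$ containing this equality). Hilbert irreducibility over the function field $E=K'(V')$ specializes the generic point along places of $E/K'$ whose residue fields are \emph{finite extensions} $K''/K'$, not $K'$ itself; you cannot in general force the specialized coordinates to lie in $K'$. The argument is salvageable if you let the specialization land in $V'(K'')$ for such a $K''$: the tuple $(a_1,\underline y_1)$ is then algebraic over $K$, $K''\models\psi(a_1,\underline y_1)$, and you then take a regular Hilbertian PAC extension $M/K''$ as in Lemma~\ref{lem:almostSurjNotSupported}. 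You would also want to first pass to a single conjunctive disjunct of the positive Boolean combination $\psi$ that the original witness satisfies, since not every polynomial $P_k$ appearing in $\psi$ need be rootless at $(a_0,\underline y_0)$. With those two corrections the specialization route works, but it is substantially heavier machinery than the paper's direct appeal to algebraic realizations of PAC fields.
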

\begin{proof}
  Suppose $\varphi(x)$ is a formula supporting $p$.
  In particular, there exists an extension $L/K$ with $L \models T \cup \{ \exists x \varphi(x) \}$.
  By the downward Löwenheim-Skolem Theorem, we may assume that $L$ is countable.
  By \cite[Proposition 23.2.4]{Fried-Jarden2008},
  $L$ is elementarily equivalent over $K$ to an ultraproduct $L' = \prod_{n=1}^\infty L_n/\mathcal{D}$, where $\mathcal{D}$ is an ultrafilter on $\mathbb{N}$ and each $L_n$ is an algebraic extension of $K$ which is PAC and satisfies $\Gal_{L_n} \cong \Gal_L$, so in particular every $L_n$ is $\omega$-free.
  Since $L' \equiv L \models \exists x \varphi(x)$, by \L{}oś's theorem on ultraproducts \cite[Corollary 7.7.2]{Fried-Jarden2008} there exists an $n$ such that $L_n \models \exists x \varphi(x)$.
  Now $L_n$ contains no element realising $p$, but has an element satisfying $\varphi$, so we deduce that $\varphi$ does not support $p$.
\end{proof}

\begin{proof}[Proof of Theorem \ref{thm:modelTheoreticFail}]
	The theorem follows immediately from Theorem \ref{thm:omittingTypes} with Lemma \ref{lem:almostSurjNotSupported}, by simultaneously omitting the sets $p_{n,d}$ for all $n$ and $d$, and additionally omitting the set $p$ from Lemma \ref{lem:nonAlgNotSupported} if $K$ is Hilbertian.
\end{proof}


\section*{Acknowledgements} 

We thank Padmavathi Srinivasan for useful comments that improved the exposition. We thank the anonymous referees for many helpful suggestions.

\begin{bibdiv}
\begin{biblist}

\bibselect{big}

\end{biblist}
\end{bibdiv}

\end{document}